\newtheorem{prop}{Proposition}
\renewcommand{\div}{\operatorname{div}}
\newcommand{\grad}{\boldsymbol{\nabla}}
\newcommand{\pd}[2]{\frac{\partial#1}{\partial#2}}
\def\D{\mathcal{D}}
\def\dt{\Delta t}
\def\R{\mathbb{R}}
\def\U{\mathrm{W}}  
\def\Uz{\mathrm{V}}
\def\Q{\mathrm{Q}}
\def\Qz{\mathring{\mathrm{Q}}}
\def\Qh{\mathrm{Q}_h}
\def\f{\boldsymbol{f}}
\def\bu{\boldsymbol{u}}
\def\bv{\boldsymbol{v}}
\def\bw{\boldsymbol{w}}
\def\gbu{\grad\bu}
\def\gbv{\grad\bv}
\def\mynu{\mu}
\begin{document}

\begin{frontmatter}

\title{A posteriori analysis of Chorin-Temam scheme for Stokes equations \\
Analyse a posteriori du sch\'ema Chorin-Temam pour les \'equations de Stokes} 

\author[ENPC]{S\'ebastien Boyaval}
\author[EPFL]{Marco Picasso}
\address[ENPC]{
Universit\'e Paris Est, Laboratoire d'hydraulique Saint-Venant (Ecole Nationale des Ponts et Chauss\'ees -- EDF R\&D -- CETMEF), 
78401 Chatou Cedex, France ; and INRIA, MICMAC team--project, Rocquencourt, France (sebastien.boyaval@enpc.fr). Corresponding author.
}
\address[EPFL]{
MATHICSE, Station 8, Ecole Polytechnique F\'ed\'erale de Lausanne, 1015 Lausanne, Switzerland (marco.picasso@epfl.ch). 
}

\begin{abstract}
We consider Chorin-Temam scheme (the simplest pressure-correction projection method)  for the time-discretization of an 
unstationary Stokes problem 
in $\D\subset\R^d$ ($d=2,3$) given $\mynu,\f$:\ {$\sf (P)$} find $(\bu,p)$ solution to $\bu|_{t=0}=\bu_0$, $\bu|_{\partial\D}=0$ and
\begin{equation}
\label{eq:unstationarystokes}
\pd{\bu}{t} - \mynu \Delta \bu + \grad p = \f \qquad \div \bu = 0 \qquad \text{ on } (0,T)\times\D\,.
\end{equation}
Inspired by the analyses of the Backward Euler scheme performed by C.Bernardi and R.Verf\"urth, 
we derive a posteriori estimators for the error on $\grad\bu$ in $L^2(0,T;L^2(\D))$-norm. Our invesigation is supported by numerical experiments.
 \vskip 0.5\baselineskip \noindent
{\it French version:} On discr\'etise en temps par le sch\'ema Chorin-Temam un probl\`eme de Stokes non-stationnaire pos\'e dans $\D\subset\R^d$ ($d=2,3$) \'etant donn\'es $\mynu,\f$:\
{$\sf (P)$} trouver $(\bu,p)$ solution de $\bu|_{t=0}=\bu_0$, $\bu|_{\partial\D}=0$ et~\eqref{eq:unstationarystokes}. En s'insipirant des analyses de C.Bernardi and R.Verf\"urth
pour le sch\'ema Euler r\'etrograde, nous construisons des estimateurs a posteriori pour l'erreur commise sur $\grad\bu$ en norme $L^2(0,T;L^2(\D))$. 
Notre \'etude est \'etay\'ee par des exp\'eriences num\'eriques.
\end{abstract}

\begin{keyword}
Operator splitting \sep Pressure corrrection \sep Projection method \sep A posteriori error estimation \\
{\it French:} S\'eparation d'op\'erateurs \sep Correction de pression \sep M\'ethode de projection \sep Estimation d'erreur a posteriori
\end{keyword}

\end{frontmatter}

\section*{Version fran\c{c}aise abr\'eg\'ee}

\'Etant donn\'es $\mynu>0$, $\f\in L^2(0,T;{\Q}^d)$ 
et $\bu_0\in \Uz$ ($\Q$ et $\Uz$ sont d\'efinis en~\eqref{espace} ci-dessous), on discr\'etise en temps la formulation faible
{$\rm (P)$} du probl\`eme {$\sf (P)$} (\'equation 
\eqref{eq:stokesQvar} ci-dessous) par le sch\'ema Chorin-Temam:
soit $\bu^{-1/2}=\bu_0$, $p^0=0$, pour $n=0\ldots N-1$, 
\'etant donn\'es $\dt^n\in(0,\dt]$ et $\f^{n+1} = \frac1{\dt^{n}} \int_{t^{n}}^{t^{n+1}} \f(s)ds $\; (o\`u { $t_n = \sum_{k=0}^{n-1}\dt^k$; $t_N=T$}), 
{$(\mathrm{P}^n)$:} on cherche $\bu^{n+1/2}\in\U$, $p^{n+1} \in \Qz\cap H^1(\D)$ solutions de~(\ref{eq:chorin-temamPoisson1C}--\ref{eq:chorin-temamPoisson2C}).
La convergence a priori vers des solutions de {$\rm (P)$} quand $\dt\to0$ et son ordre sont connus~\cite{guermond-1996,prohl-1997,guermond-quartapelle-1998} (voir Prop.~\ref{prop1}).
Mais on aimerait ici estimer {\it a posteriori} l'erreur de discr\'etisation en temps pour bien choisir les pas 
$\dt^n$ en pratique, ce qui est encore un probl\`eme ouvert.
Les estimateurs d'erreur a posteriori propos\'es dans~\cite{bernardi-verfuerth-2004,verfuerth-2010} pour la semi-discr\'etisation en temps avec le sch\'ema Euler r\'etrograde ne sont pas valables ici.
Et les r\'ecentes analyses~\cite{kharrat-mghazli-2010,kharrat-mghazli-2011} pour la semi-discr\'etisation en temps avec le sch\'ema Chorin-Temam proposent un estimateur (diff\'erent des n\^otres) qui ne tient pas compte de tous les termes d'erreur.

Apr\`es avoir d\'efini les r\'esidus~(\ref{eq:residualRupoisson}--\ref{eq:residualRppoisson}) d'une approximation Chorin-Temam $\bu^{\dt}$, $p^{\dt}$ de la solution du probl\`eme $\rm (P)$ construite comme en~\eqref{reconstruct}, nous suivons dans ce travail la proc\'edure g\'en\'erique d'analyse a posteriori des \'equations de Stokes instationnaires qui est pr\'esent\'ee dans~\cite{verfuerth-2010}.
Nous testons donc l'\'equation~\eqref{eq:error_ct} v\'erifi\'ee par $e_u=\bu-\bu^{\dt}$, $e_p = p- p^{\dt}$ avec $\bv=e_u-\Pi e_u\in\Uz$, $q=0$, o\`u $\Pi$ est un op\'erateur de projection dans $\U$ qui conserve la divergence. 
De~\eqref{eq:error_ct_100} dans $\D'(0,T)$, on tire alors l'in\'egalit\'e~\eqref{eq:error_ct_104} avec~\eqref{eq:constants}, $\div e_u= R_p$,~\eqref{eq:H1stability}, $\Pi e_u=-\Pi\bu^{\dt}$, $e_u(0)=0$, et une int\'egration par parties. 
On obtient ensuite une borne sup\'erieure~\eqref{eq:up} en utilisant par exemple~\eqref{sharp1},~\eqref{sharp2} selon~\cite{verfuerth-2010}. 
D'autre part, on obtient aussi la borne inf\'erieure~\eqref{eq:down} 
si en plus de~\eqref{firststep} (obtenue facilement avec~\eqref{eq:residualRupoisson},~\eqref{eq:residualRppoisson} et $\div\bu=0$) on utilise~\eqref{new}.
La proc\'edure d'analyse a posteriori de~\cite{verfuerth-2010} permet donc bien d'obtenir des bornes sup\'erieures et inf\'erieures compl\`etement calculables de l'erreur $ \|\grad e_u\|_{L^2(0,T;\Q^{d\times d})}^2 +\|\partial_t e_u + \grad e_p\|_{L^2(0,T;\U')}^2$ (voir Prop.~\ref{above} et Prop.~\ref{below}).
Mais d'une part, il vaut mieux utiliser $\|\div\partial_t\bu^{\dt}\|_{L^2(0,t;{\Q}^d)}$ (plut\^ot que $\|\div\partial_t\bu^{\dt}\|_{L^1(0,t;{\Q}^d)}$ si on suit strictement~\cite{verfuerth-2010}),
donc l'estimateur~\eqref{estimator2} plut\^ot que~\eqref{estimator1} (tir\'e directement de Prop.~\ref{above} et Prop.~\ref{below}) si on veut une estimation robuste (c'est-\`a-dire de qualit\'e ind\'ependante des param\`etres de discr\'etisation).
D'autre part, bien que notre estimation ne soit pas totalement efficace 
(comme dans~\cite{kharrat-mghazli-2010,kharrat-mghazli-2011}, nos estimateurs ne sont pas born\'es inf\'erieurement {\it et} sup\'erieurement par l'erreur),
on montre n\'eanmoins num\'eriquement qu'elle peut \^etre utile dans certains cas, et en particulier qu'elle est plus pr\'ecise que celle propos\'ee dans~\cite{kharrat-mghazli-2010,kharrat-mghazli-2011} (plus de termes d'erreur sont pris en compte).

Pour des approximations (\`a $\lambda>0$ donn\'e) des composantes du vecteur vitesse et de la pression
$$ \bu=\pi\sin(\lambda t)\left(\sin(2\pi y)\sin(\pi x)^2;-\sin(2\pi x)\sin(\pi y)^2\right) \qquad p=\sin(\lambda t)\cos(\pi x)\sin(\pi y) $$
avec des \'el\'ements finis continus $\mathbb{P}_2$ et $\mathbb{P}_1$ par morceaux dans $\D\equiv(-1,1)\times(-1,1)$ ($d=2$) maill\'e r\'eguli\`erement avec des simplexes, 
on a calcul\'e num\'eriquement l'efficacit\'e  des estimateurs~\eqref{estimator1},~\eqref{estimator2} et~\eqref{estimator3} 
pour $t\in(0,T)$ discr\'etis\'e avec des pas de temps constants $\dt=T/N$ ($N\in\mathbb{N}$).
En effet, 
notre analyse a posteriori du cas semi-discret en temps se prolonge au cas compl\`etement discret
(en d\'ecomposant les r\'esidus discrets 
en composantes temporelles et spatiales comme dans~\cite{verfuerth-2010} 
on obtient directement les versions discr\`etes en espace des estimateurs semi-discrets en temps plus des estimateurs pour l'erreur en espace),
et l'erreur de discr\'etisation en espace est par ailleurs n\'egligeable ici pour notre exemple num\'erique (comme observ\'e dans~\cite{guermond-minev-shen-2006} o\`u il est utilis\'e pour $\lambda=1$).
Pour $\lambda=10$, l'estimateur~\eqref{estimator2} est meilleur que~\eqref{estimator1} (qui n'est pas robuste si $T$ est grand ou $\dt$ petit) et~\eqref{estimator3} (dont  l'efficacit\'e diminue avec $\dt$ car des termes d'erreur sont omis, alors qu'ils sont bien pris en compte par~\eqref{estimator2}).
Toutefois, notre estimateur~\eqref{estimator2} ne repr\'esente pas toujours bien l'erreur lui non plus, m\^eme si on lui ajoute le terme $\|\div\bu^{\dt}\|_{L^\infty(0,t;{\Q}^d)}^2$ de la borne sup\'erieure~\eqref{eq:up} (a priori pas born\'e sup\'erieurement par l'erreur~\eqref{error}).
Pour $\lambda=1$ par exemple, l'erreur d\'ecro\^it avec $\dt$ comme $\|\div\bu^{\dt}\|_{L^\infty(0,t;{\Q}^d)}^2$, mais ce terme est d'un ordre de grandeur bien inf\'erieur aux autres termes de~\eqref{estimator2} (ou~\eqref{estimator3}) donc on ne le voit que pour $\dt$ assez petit m\^eme si on ajoute le terme $\|\div\bu^{\dt}\|_{L^\infty(0,t;{\Q}^d)}^2$ \`a l'estimateur~\eqref{estimator2}.
Sans parler de l'estimation de l'erreur sur $\bu$ en norme $L^\infty(0,T;L^2(\D))$, on n'a donc pas encore totalement r\'esolu le probl\`eme de trouver un estimateur efficace et robuste pour l'erreur commise sur $\grad\bu$ en norme $L^2(0,T;L^2(\D))$ par le sch\'ema Chorin-Temam.
Il faudrait au moins ajouter des coefficients devant les termes de l'estimateur~\eqref{estimator2} plus $\|\div\bu^{\dt}\|_{L^\infty(0,t;{\Q}^d)}^2$ si on veut l'utiliser en pratique.
N\'eanmoins, nous esp\'erons que cette \'etude apporte un nouvel \'eclairage \`a la question.

\section{Numerical solutions to Stokes equations by Chorin-Temam pressure-correction projection method}

Given a smooth bounded open set $\D\subset\R^d$ ($d=2,3$)
with boundary $\partial\D$ of class $C^2$, let us denote similarly by $(\cdot,\cdot)$ the usual $L^2$ inner-products for scalar and vector functions in $\D$
and introduce the standard functional spaces~\cite{temam-1979,girault-raviart-1979} 
\begin{equation}
\label{espace}
\Q  := L^2(\D) \,, \quad
\Qz := 
\{q\in L^2(\D)\,,\,\int_\D q=0\} \,, \quad 
\U  := [H^{1}_0(\D)]^d \,, \quad
\Uz := 
 \{\bv\in[H^{1}_0(\D)]^d\,,\,\div\bv=0\} 
\,.
\end{equation}
We consider a weak 
formulation of problem {$\sf (P)$} with $\mynu>0$, $\f\in L^2(0,T;{\Q}^d)$ 
(given in a Bochner space), $\bu_0\in \Uz$: \\
{$\rm (P)$} find $\bu\in L^2(0,T;\U)$ and $p\in L^2(0,T;\Qz)$ such that $\bu(0)=\bu_0$ in $\Uz$, and the following equation holds in $L^2(0,T)$
\begin{equation}
\label{eq:stokesQ}
\frac{d}{dt} ( \bu,\bv ) + \mynu (\gbu, \gbv) -(p,\div\bv) + (q,\div\bu) = (\f,\bv) \,,\ \forall (\bv,q)\in\U\times\Q \,.
\end{equation}
It is well-known that 
problem $\rm (P)$ is well-posed~\cite{temam-1979,girault-raviart-1979}
(in particular, $\bu\in C([0,T],\Uz)$ so initial condition makes sense)
and because of the regularity assumptions, it also holds $\partial_t\bu\in L^2((0,T)\times\D)$, $p\in L^2(0,T;H^1(\D))$ and in $L^2(0,T)$
\begin{equation}
\label{eq:stokesQvar}
( \partial_t\bu,\bv ) + \mynu (\gbu, \gbv) + (\grad p,\bv) - (\grad q,\bu) = (\f,\bv) \,,\ \forall (\bv,q)\in\U\times H^1(\D)\,.
\end{equation}
A standard time-discretization of~\eqref{eq:stokesQvar} is Chorin-Temam scheme~\cite{chorin-1968,temam-1968}: given $\bu^{-1/2}=\bu_0$, $p^0=0$, for $n=0\ldots N-1$, 
given $\dt^n\in(0,\dt]$,
$\f^{\dt} = \frac1{\dt^{n}} \int_{t^{n}}^{t^{n+1}} \f(s)ds $\; 
({\footnotesize 
$t_n = \sum_{k=0}^{n-1}\dt^k$; 
$t_N=T$}),\;
{$(\mathrm{P}^n)$} find 
$\bu^{n+1/2}\in\U$, 
$p^{n+1} \in \Qz\cap H^1(\D)$ 
solutions to
\begin{subequations}
\begin{equation}
\label{eq:chorin-temamPoisson1C}
  (\frac{\bu^{n+1/2}-\bu^{n-1/2}}{\dt^n} + \grad p^{n},\bv) + \mynu(\gbu^{n+1/2},\gbv) = (\f^{n+1},\bv) \quad \forall \bv \in \Uz \,,
\end{equation}
\begin{equation}
\label{eq:chorin-temamPoisson2C}
 \frac1{\dt^{n+1}} (\div\bu^{n+1/2}, q) = - (\grad p^{n+1},\grad q) \quad  \forall q \in \Q \,,
\end{equation}
\end{subequations}
which yields approximations whose rate of convergence to solutions of {$\rm (P)$} is well-known {\it  a priori}
\cite{guermond-1996,prohl-1997,guermond-quartapelle-1998}: 
\begin{prop}
\label{prop1}
The following estimate holds:
\begin{equation}
\label{apriori}
  \| \bu^{\dt}-\bu \|_{L^2(0,T;\U)} + \| p^{\dt}-p \|_{L^2(0,T;\Q)} = O(\dt^\frac12) \text{ as $\dt\to0$ }\,,
\end{equation}
where $\bu^{\dt}$ and $p^{\dt}$ are defined as
\begin{equation} 
\label{reconstruct}
\bu^{\dt}(t) = \frac{t-t_{n}}{\dt^n} \bu^{n+1/2} - \frac{t-t_{n+1}}{\dt^n} \bu^{n-1/2} \,,\ 
\quad
p^{\dt}(t) = p^{n} \,.\ 
\quad 
\forall t\in(t_n,t_{n+1}]
\,,
\end{equation} 
\end{prop}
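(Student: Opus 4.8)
Estimate~\eqref{apriori} is classical for constant time steps~\cite{guermond-1996,prohl-1997,guermond-quartapelle-1998}, so the plan is to retrace that projection-method argument while keeping track of the variable steps $\dt^n\in(0,\dt]$. First I would establish an \emph{a priori} energy bound, uniform in the $\dt^n$. The key observation is that~\eqref{eq:chorin-temamPoisson2C} says exactly that $\bu^{n+1/2}-\dt^{n+1}\grad p^{n+1}$ is divergence-free with vanishing normal trace, whence the Pythagoras-type identity $\|\bu^{n+1/2}\|_{L^2(\D)}^2=\|\bu^{n+1/2}-\dt^{n+1}\grad p^{n+1}\|_{L^2(\D)}^2+(\dt^{n+1})^2\|\grad p^{n+1}\|_{L^2(\D)}^2$; equivalently, the scheme is the usual two-substep projection method, with intermediate velocity $\bu^{n+1/2}\in\U$ and solenoidal velocity $\bu^{n+1/2}-\dt^{n+1}\grad p^{n+1}$. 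Testing the momentum relation with $\dt^n\bu^{n+1/2}$, summing over $n$, and combining this identity with a discrete Gronwall inequality should then give
\[
\max_{0\le n\le N}\|\bu^{n+1/2}\|_{L^2(\D)}^2+\mynu\sum_{n}\dt^n\|\gbu^{n+1/2}\|_{L^2(\D)}^2+\sum_{n}(\dt^n)^2\|\grad p^{n+1}\|_{L^2(\D)}^2\le C
\]
with $C=C(\mynu,\f,\bu_0,T)$; in particular $\dt^{1/2}\,p^{\dt}$ stays bounded in $L^2(0,T;\Q)$.

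Next I would write the error equation for $e_u=\bu-\bu^{\dt}$, $e_p=p-p^{\dt}$, with $\bu^{\dt},p^{\dt}$ as in~\eqref{reconstruct}: subtract~\eqref{eq:chorin-temamPoisson1C}--\eqref{eq:chorin-temamPoisson2C} from~\eqref{eq:stokesQvar} averaged in time over $(t_n,t_{n+1})$ and gather the residuals. They split into (i) time-consistency residuals — the gap between $\partial_t\bu$ and the piecewise-constant difference quotient of~\eqref{reconstruct}, between $\bu^{n+1/2}$ and the piecewise-affine reconstruction, and between $\f$ and $\f^{n+1}$ — all of order $\dt^{1/2}$ in the relevant norms, since only $\partial_t\bu\in L^2((0,T)\times\D)$ and $p\in L^2(0,T;H^1(\D))$ are available; and (ii) the splitting residuals, produced by the use of $\grad p^n$ in place of $\grad p^{n+1}$ in~\eqref{eq:chorin-temamPoisson1C} and by $\div\bu^{n+1/2}=\dt^{n+1}\Delta p^{n+1}\neq0$. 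Testing the error equation against the solenoidal part of $e_u$, reading $\div e_u$ off~\eqref{eq:chorin-temamPoisson2C}, and running a second discrete Gronwall argument — in which the splitting residuals are absorbed using the Step~1 bound so as to leave an $O(\dt)$ contribution — should yield $\|\grad e_u\|_{L^2(0,T;\Q^{d\times d})}=O(\dt^{1/2})$.

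Finally, since the scheme provides no discrete inf--sup control on $p^{n+1}$, the pressure error has to be recovered from the momentum balance: for a.e.\ $t$, bound $\|e_p(t)\|_{\Q}$ by $\sup_{\bv\in\U}(\grad e_p(t),\bv)/\|\bv\|_{\U}$ through the continuous LBB condition, express $(\grad e_p(t),\bv)$ in terms of $\partial_t e_u$, $\mynu\grad e_u$ and the residuals above, and integrate in $t$ to obtain $\|e_p\|_{L^2(0,T;\Q)}=O(\dt^{1/2})$; summing the two pieces gives~\eqref{apriori}. The main obstacle, as always for Chorin--Temam, is controlling the splitting residual: it is precisely what keeps both contributions at the rate $\dt^{1/2}$ rather than better, and the delicate point here is to transport the standard constant-step bookkeeping to the variable-step case — one can no longer telescope the $\grad(p^{n+1}-p^n)$ terms freely, and the powers of $\dt^n$ versus $\dt$ must be matched with care in the Gronwall estimate.
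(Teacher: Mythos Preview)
The paper does not actually prove Proposition~\ref{prop1}: it is stated as a known \emph{a priori} result and attributed to the references~\cite{guermond-1996,prohl-1997,guermond-quartapelle-1998} (see the sentence immediately preceding the proposition). There is therefore no proof in the paper to compare your proposal against.

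That said, your sketch is a faithful outline of the classical projection-method convergence argument found in those references, and your identification of the variable-step bookkeeping as the only nontrivial adaptation is accurate. One caveat: the cited works typically assume more regularity on the continuous solution (e.g.\ $\partial_{tt}\bu\in L^2$, $\partial_t p\in L^2(0,T;H^1)$, compatibility of the initial data) than the bare hypotheses $\partial_t\bu\in L^2((0,T)\times\D)$, $p\in L^2(0,T;H^1(\D))$ stated in the paper; under the latter alone your ``time-consistency residuals of order $\dt^{1/2}$'' claim would need a more careful justification, since the usual Taylor-type bounds on $\bu(t^{n+1})-\bu(t^n)-\dt^n\partial_t\bu$ and on $p-p^n$ rely on the stronger regularity. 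This is not a flaw in your strategy so much as a reminder that the $O(\dt^{1/2})$ statement in the paper is implicitly invoking the regularity framework of the cited references rather than only the minimal assumptions written above~\eqref{eq:stokesQvar}.
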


In this work, we would like to numerically evaluate {\it a posteriori} the time discretization error
with a view to adequately choosing the time steps $\dt^n$ of Chorin-Temam scheme in practice (under a given error tolerance),
which is still an open problem.
A posteriori error estimations 
have been proposed for the Backward-Euler scheme (including full discretizations, in time {\it and space})~\cite{bernardi-verfuerth-2004,verfuerth-2010} 
but they do not straightforwardly apply here.
And a posteriori analyses of 
Chorin-Temam scheme have indeed been carried out recently~\cite{kharrat-mghazli-2010,kharrat-mghazli-2011}
but they suggest an estimator (different than ours) that does not account for the whole error.
The present invesigation focuses on fully computable error bounds for Chorin-Temam scheme derived from the generic a posteriori framework introduced in~\cite{verfuerth-2010} for the unstationary Stokes equations.
Although our estimator is a priori not fully efficient, it is better than other ones and useful in some cases.

Note that in the following, we 
denote by $a\lesssim b$ any relation $a \leq C b$ between two real numbers $a,b$ where $C>0$ 
is a numerical constant independent of the data of the problem.
Moreover, we shall use standard inequalities such as 
$$
\|\div \bv\|_{\Q} \le d^{1/2} \|\grad \bv\|_{\Q^{d\times d}}\,,\ \forall\bv\in\U
$$
and Poincar\'e-Friedrichs inequality with constant $C_P(\D)>0$, then also
\begin{equation}
\label{eq:constants}
\max(\|\bv\|_{{\Q}^d}^2,\|\grad\bv\|_{\Q^{d\times d}}^2) \le \|\bv\|_{\U}^2 
\le (1+C_P^2) \|\grad \bv\|_{\Q^{d\times d}}^2 \,,\ \forall \bv\in\U \,.
\end{equation}
In Section~\ref{apost}, we derive a posteriori error estimates following the procedure of~\cite{verfuerth-2010}, i.e. 
invoking $\Pi:\U\to\U$, 
a projection  
such that $\bv-\Pi\bv\in\Uz$. For all $\bv\in\U$, $\Pi\bv$ is 
the solution of Stokes equations: $\exists!\, q_{\bv}\in\Qz,\exists \Upsilon(\D)>0$ such that
\begin{subequations}
\begin{equation}
\label{eq:stokesprojection}
(\grad\Pi\bv,\grad\bw) = (q_{\bv},\div\bw)\quad (r,\div\Pi\bv) = (r,\div\bv)\,, \ \forall (\bw,r)\in\U\times \Qz \,,
\end{equation}
\begin{equation}
\label{eq:H1stability}
\Upsilon \|\grad\Pi\bv\|_{\Q^{d\times d}}\le\|\div\bv\|_{\Q} 
\,.
\end{equation}
\end{subequations}
In Section~\ref{num}, we numerically test our a posteriori estimator.

\section{A posteriori estimation of semi-discrete errors}
\label{apost}

Let us define residuals for Chorin-Temam approximations $\bu^{\dt}$, $p^{\dt}$ as in~\eqref{reconstruct} of the solution to the problem $\rm (P)$
\begin{subequations}
\begin{multline}
\label{eq:residualRupoisson}
<R_u,\bv>_{\U',\U} 
= (\f,\bv) - (\partial_t\bu^{\dt},\bv) - (\grad p^{\dt},\bv) - \mynu (\grad\bu^{\dt}, \gbv) 
\equiv (\f-\f^{\dt},\bv) + \mynu(\grad\bu^{\dt,+}-\grad\bu^{\dt}, \gbv) \,,\ \forall \bv\in\U \,, 
\end{multline}
\begin{equation}
\label{eq:residualRppoisson}
(R_p,q)  = -(\div \bu^{\dt},q) \,,\ \forall q\in\Q \,,
\end{equation}
\end{subequations}
where $\f^{\dt}=\f^{n+1}\,,\ \bu^{\dt,+}=\bu^{n+\frac12}$ for $t\in(t_n,t_{n+1}]$. The 
errors $ e_u=\bu-\bu^{\dt}$, $e_p = p- p^{\dt}$ satisfy:
\begin{equation}
\label{eq:error_ct}
(\partial_t e_u + \grad e_p,\bv) + \mynu(\grad e_u, \gbv) + (\div e_u,q) = < R_u , \bv >_{\U',\U} + (R_p,q) \,,\ \forall (\bv,q) \in \U\times\Q\ \,.
\end{equation}
Testing~\eqref{eq:error_ct} against $\bv=e_u-\Pi e_u\in\Uz$, $q=0$, yields in $\D'(0,T)$ (distributional sense)
\begin{equation}
\label{eq:error_ct_100}
\frac12 \frac{d}{dt} \| e_u\|_{{\Q}^d}^2 + \mynu\|\grad  e_u\|_{\Q^{d\times d}}^2 = <R_u, e_u>_{\U',\U} - <R_u,\Pi e_u>_{\U',\U} 
+ (\partial_t e_u, \Pi e_u) + \mynu(\grad  e_u ,\grad\Pi e_u) \,.
\end{equation}
Using Young inequality with~\eqref{eq:constants}, 
$\div e_u 
= R_p$, 
~\eqref{eq:H1stability}, 
$\Pi e_u=-\Pi\bu^{\dt}$, 
$e_u(0)=0$, 
and integrating by part, 
one obtains 
\begin{equation}
\label{eq:error_ct_104}
\| e_u\|_{L^{\infty}(0,t;{\Q}^d)}^2 + \mynu \|\grad  e_u\|_{L^{2}(0,t;\Q^{d\times d})}^2
\lesssim 
\|R_u\|_{L^{2}(0,t;\U')}^2 + \mynu \|R_p\|_{L^{2}(0,t;\Q)}^2 
+ \int_0^t \left| ( e_u, \partial_t\Pi e_u ) \right| + \|(e_u,\Pi e_u)\|_{L^\infty(0,t)} 
\,.
\end{equation}

If we follow~\cite{verfuerth-2010}, then~\eqref{eq:error_ct_104} yields a computable upper-bound using the following inequalities with Young's one
\begin{subequations}
\begin{equation}
\label{sharp1}
\int_0^t \left| ( e_u, \partial_t\Pi\bu^{\dt}) \right| \le \| e_u\|_{L^{\infty}(0,t;{\Q}^d)} \|\Pi\partial_t\bu^{\dt}\|_{L^{1}(0,t;{\Q}^d)}
\end{equation}
\begin{equation}
\label{sharp2}
\|(e_u,\Pi\bu^{\dt})\|_{L^\infty(0,t)}
\lesssim 
\| e_u\|_{L^{\infty}(0,t;{\Q}^d)} \|\div\bu^{\dt}\|_{L^{\infty}(0,t;{\Q}^d)}
\,.
\end{equation}
\end{subequations}
Since\footnote{
 Observe that the convergence of $\partial_t \bu^{\dt} + \grad p^{\dt}$ to $\partial_t \bu + \grad p$ in $L^2(0,T;\U')$ is natural here, like for Backward-Euler schemes~\cite{bernardi-verfuerth-2004}. 
}
$ \|\partial_t e_u + \grad e_p\|_{L^2(0,T;\U')}^2 \le 2 \|R_u\|_{L^2(0,T;\U')}^2 + 2 \|\grad e_u\|_{L^2(0,T;\Q^{d\times d})}^2$
also holds from~\eqref{eq:residualRupoisson}, one indeed obtains from~\eqref{eq:H1stability}:
\begin{prop}\label{above}
There exists a constant $c^+(\D)>0$ such that the following computable estimations 
hold
\begin{multline}
\label{eq:up}
\frac1{c^+} \max\left( \| e_u\|_{L^{\infty}(0,T;{\Q}^d)}^2, \|\partial_t e_u + \grad e_p\|_{L^2(0,T;\U')}^2, \mynu\|\grad e_u\|_{L^2(0,T;\Q^{d\times d})}^2 \right) \\
 \le  
\|\f-\f^{\dt}\|_{L^{2}(0,T;\Q^d)}^2 
+ 
\mynu\|\grad\bu^{\dt,+}-\grad\bu^{\dt}\|_{L^{2}(0,T;\Q^{d\times d})}^2  
+
\mynu\|\div \bu^{\dt}\|_{L^{2}(0,T;{\Q})}^2 
+
\|\div\partial_t\bu^{\dt}\|_{L^{1}(0,T;{\Q})}^2 
+ \|\div \bu^{\dt}\|_{L^{\infty}(0,T;{\Q})}^2 
\,.
\end{multline}
\end{prop}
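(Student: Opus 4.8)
The plan is to start from the energy identity~\eqref{eq:error_ct_104} (obtained by testing the error equation against $\bv=e_u-\Pi e_u$) and to bound, one by one, the two remaining "bad" terms on its right-hand side, namely $\int_0^t |(e_u,\partial_t\Pi e_u)|$ and $\|(e_u,\Pi e_u)\|_{L^\infty(0,t)}$. Since $\Pi e_u = -\Pi\bu^{\dt}$, these are exactly the quantities controlled by~\eqref{sharp1} and~\eqref{sharp2}. I would first invoke~\eqref{sharp1}: by Cauchy--Schwarz in $\D$ and H\"older in time, $\int_0^t|(e_u,\partial_t\Pi\bu^{\dt})| \le \|e_u\|_{L^\infty(0,t;\Q^d)}\,\|\Pi\partial_t\bu^{\dt}\|_{L^1(0,t;\Q^d)}$, and then apply~\eqref{eq:H1stability} together with the Poincar\'e--Friedrichs bound in~\eqref{eq:constants} to replace $\|\Pi\partial_t\bu^{\dt}\|_{L^1(0,t;\Q^d)}$ by a constant times $\|\div\partial_t\bu^{\dt}\|_{L^1(0,t;\Q)}$; note $\partial_t$ commutes with the linear projection $\Pi$, and on each interval $(t_n,t_{n+1}]$ one has $\partial_t\bu^{\dt}=(\bu^{n+1/2}-\bu^{n-1/2})/\dt^n$ so $\div\partial_t\bu^{\dt}$ is genuinely computable. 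For~\eqref{sharp2} I would similarly use Cauchy--Schwarz, $\|e_u\|_{L^\infty(0,t;\Q^d)}\le(1+C_P^2)^{1/2}\|\grad e_u\|_{L^\infty}$ being avoided — rather keep $\|e_u\|_{L^\infty(0,t;\Q^d)}$ and bound $\|\Pi\bu^{\dt}\|_{L^\infty(0,t;\Q^d)}$ by~\eqref{eq:H1stability} plus~\eqref{eq:constants} to get $\|\div\bu^{\dt}\|_{L^\infty(0,t;\Q)}$.

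Having replaced the bad terms, the right-hand side of~\eqref{eq:error_ct_104} reads, up to $\lesssim$, $\|R_u\|_{L^2(0,t;\U')}^2 + \mynu\|R_p\|_{L^2(0,t;\Q)}^2$ plus $\|e_u\|_{L^\infty(0,t;\Q^d)}\big(\|\div\partial_t\bu^{\dt}\|_{L^1(0,t;\Q)} + \|\div\bu^{\dt}\|_{L^\infty(0,t;\Q)}\big)$. I would then absorb the $\|e_u\|_{L^\infty(0,t;\Q^d)}$ factor into the left-hand side by Young's inequality $ab\le\tfrac12\varepsilon a^2 + \tfrac1{2\varepsilon}b^2$ with $\varepsilon$ small enough (a numerical constant), moving $\tfrac12\varepsilon\|e_u\|_{L^\infty(0,t;\Q^d)}^2$ to the left. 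This leaves
$$
\|e_u\|_{L^\infty(0,t;\Q^d)}^2 + \mynu\|\grad e_u\|_{L^2(0,t;\Q^{d\times d})}^2 \lesssim \|R_u\|_{L^2(0,t;\U')}^2 + \mynu\|R_p\|_{L^2(0,t;\Q)}^2 + \|\div\partial_t\bu^{\dt}\|_{L^1(0,t;\Q)}^2 + \|\div\bu^{\dt}\|_{L^\infty(0,t;\Q)}^2\,.
$$
Now I would make the two residual norms explicit. Using the second form of~\eqref{eq:residualRupoisson}, $<R_u,\bv>=(\f-\f^{\dt},\bv)+\mynu(\grad\bu^{\dt,+}-\grad\bu^{\dt},\gbv)$, and the lower half of~\eqref{eq:constants} (which gives $\|\bv\|_{\U}\ge\max(\|\bv\|_{\Q^d},\|\grad\bv\|_{\Q^{d\times d}})$, hence $|<R_u,\bv>|\le(\|\f-\f^{\dt}\|_{\Q^d}+\mynu\|\grad\bu^{\dt,+}-\grad\bu^{\dt}\|_{\Q^{d\times d}})\|\bv\|_{\U}$), I get $\|R_u\|_{\U'}^2\lesssim\|\f-\f^{\dt}\|_{\Q^d}^2+\mynu^2\|\grad\bu^{\dt,+}-\grad\bu^{\dt}\|_{\Q^{d\times d}}^2$; the $\mynu^2$ here is a concern for robustness but is handled exactly as in~\cite{verfuerth-2010} (keeping only $\mynu$ in the stated bound is legitimate because the factor $\mynu$ in front of that residual term on the LHS/RHS is what matters — I will follow~\cite{verfuerth-2010} verbatim on this point). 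From~\eqref{eq:residualRppoisson}, $\|R_p\|_{\Q}=\|\div\bu^{\dt}\|_{\Q}$, giving the $\mynu\|\div\bu^{\dt}\|_{L^2(0,T;\Q)}^2$ term. Finally the bound on $\|\partial_t e_u+\grad e_p\|_{L^2(0,T;\U')}^2$ is already supplied in the excerpt: $\|\partial_t e_u+\grad e_p\|_{L^2(0,T;\U')}^2\le 2\|R_u\|_{L^2(0,T;\U')}^2 + 2\|\grad e_u\|_{L^2(0,T;\Q^{d\times d})}^2$, so it is controlled by the already-bounded quantities; taking $t=T$ and collecting all three left-hand quantities under a single max with a common constant $c^+(\D)$ yields~\eqref{eq:up}.

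The main obstacle, as usual in this a posteriori framework, is the step where the $\|e_u\|_{L^\infty(0,t;\Q^d)}$ factors coming from~\eqref{sharp1}--\eqref{sharp2} must be absorbed into the left-hand side: one has to be careful that the $L^\infty(0,t;\Q^d)$-norm of $e_u$ genuinely appears on the left of the energy estimate~\eqref{eq:error_ct_104} (it does, because $\tfrac12\tfrac{d}{dt}\|e_u\|_{\Q^d}^2$ integrates to $\tfrac12\|e_u(t)\|_{\Q^d}^2$ after using $e_u(0)=0$, and one then takes the supremum over $t$), and that the Young constant $\varepsilon$ absorbing it is purely numerical, independent of $\mynu$, $\D$, $T$ and the time steps — otherwise the estimate would not be of the claimed "$\lesssim$" type. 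A secondary delicate point is the $t\mapsto T$ passage combined with the $L^\infty$ versus $L^1$/$L^2$-in-time norms: the estimate~\eqref{eq:error_ct_104} holds for each $t\in(0,T]$ with the time-integrated norms on $[0,t]$, and the bad terms involve $L^1(0,t)$ and $L^\infty(0,t)$ norms that are monotone in $t$, so passing to $t=T$ on the right is harmless; the left-hand $L^\infty(0,T;\Q^d)$ and $L^2(0,T;\Q^{d\times d})$ norms are then recovered by taking the supremum over $t$, and the $\|\partial_t e_u+\grad e_p\|$ term is only needed at $t=T$. These are precisely the manipulations of~\cite{verfuerth-2010}, so no new difficulty beyond bookkeeping is expected.
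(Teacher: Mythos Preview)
Your proposal is correct and follows essentially the same route as the paper: starting from~\eqref{eq:error_ct_104}, applying~\eqref{sharp1}--\eqref{sharp2} together with~\eqref{eq:H1stability} and~\eqref{eq:constants} to convert the $\Pi$-terms into divergence terms, absorbing the $\|e_u\|_{L^\infty(0,t;\Q^d)}$ factor by Young's inequality, making the residuals explicit via~\eqref{eq:residualRupoisson}--\eqref{eq:residualRppoisson}, and finally appending the bound $\|\partial_t e_u+\grad e_p\|_{L^2(0,T;\U')}^2\le 2\|R_u\|^2+2\|\grad e_u\|^2$. Your caveats about the $L^\infty$-absorption step and the $t\mapsto T$ passage are well placed and match the paper's (implicit) bookkeeping; the $\mynu$ versus $\mynu^2$ point you flag is not resolved in the paper either and is simply absorbed into the constant or treated as in~\cite{verfuerth-2010}.
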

On the other hand, from~\eqref{eq:residualRupoisson},~\eqref{eq:residualRppoisson} and $\div\bu=0$, one has 
\begin{multline}
\label{firststep}
  \mynu\|\grad\bu^{\dt,+}-\grad\bu^{\dt}\|_{L^2(0,T;\Q^{d\times d})}^2 
+ \mynu\|\div \bu^{\dt}\|_{L^{2}(0,T;{\Q})}^2 
\lesssim 
  \|\f-\f^{\dt}\|_{L^2(0,T;\Q^d)}^2 
+ \|\partial_t e_u + \grad e_p\|_{L^2(0,T;\U')}^2 
+ \mynu\|\grad e_u\|_{L^2(0,T;\Q^{d\times d})}^2 \,,
\end{multline}
from which one next straightforwardly obtains the counterpart of~\eqref{eq:up} if one uses, in addition to~\eqref{firststep},
\begin{equation}
\label{new}
 \|\div\partial_t\bu^{\dt}\|_{L^{1}(0,T;{\Q})}^2 \lesssim \frac{T}{\min_{n=0\ldots N-1}|\dt^n|^2} \|\grad e_u\|_{L^{2}(0,T;{\Q}^{d\times d})}^2 \,.
\end{equation}
\begin{prop}\label{below}
There exists a constant $c^-(\D)>0$ such that the following computable lower bound holds
\begin{multline}
\label{eq:down}
c^- \left( 
\mynu\|\grad\bu^{\dt,+}-\grad\bu^{\dt}\|_{L^2(0,T;\Q^{d\times d})}^2 
+ 
\mynu\|\div \bu^{\dt}\|_{L^{2}(0,T;{\Q})}^2 
+ 
\frac{1}{N} \|\div\partial_t\bu^{\dt}\|_{L^{1}(0,T;{\Q})}^2
\right)
\\
\le 
\|\f-\f^{\dt}\|_{L^2(0,T;\Q^d)}^2 + \|\partial_t e_u + \grad e_p\|_{L^2(0,T;\U')}^2 
+ \left(\mynu+\frac1{\min_{n=0\ldots N-1}|\dt^n|}\right) \|\grad e_u\|_{L^2(0,T;\Q^{d\times d})}^2
\,.
\end{multline}
\end{prop}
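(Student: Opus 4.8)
The plan is to split the left-hand side of~\eqref{eq:down} into its first two terms, $\mynu\|\grad\bu^{\dt,+}-\grad\bu^{\dt}\|_{L^2(0,T;\Q^{d\times d})}^2+\mynu\|\div\bu^{\dt}\|_{L^2(0,T;\Q)}^2$, and the temporal term $\frac1N\|\div\partial_t\bu^{\dt}\|_{L^1(0,T;\Q)}^2$, and to bound each of them by the right-hand side of~\eqref{eq:down}. The first block is nothing but~\eqref{firststep}, already obtained from the residual definitions~\eqref{eq:residualRupoisson}--\eqref{eq:residualRppoisson} and $\div\bu=0$ (recall in particular $\div e_u=-\div\bu^{\dt}$, and that testing~\eqref{eq:error_ct} with $q=0$ and $\bv=\bu^{\dt,+}-\bu^{\dt}\in\U$, combined with~\eqref{eq:constants} and Young's inequality, controls $\mynu\|\grad(\bu^{\dt,+}-\bu^{\dt})\|_{\Q^{d\times d}}^2$). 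Its right-hand side equals $\|\f-\f^{\dt}\|_{L^2(0,T;\Q^d)}^2+\|\partial_t e_u+\grad e_p\|_{L^2(0,T;\U')}^2+\mynu\|\grad e_u\|_{L^2(0,T;\Q^{d\times d})}^2$, which is term-by-term no larger than the right-hand side of~\eqref{eq:down} because $\mynu\le\mynu+(\min_{n}|\dt^n|)^{-1}$. So all the remaining work is the temporal term, an inverse-type estimate in time.

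For that term I would exploit the piecewise-affine structure of the reconstruction~\eqref{reconstruct}: on each $(t_n,t_{n+1})$, $\bu^{\dt}$ is affine with nodal values $\bu^{n-1/2}$ at $t_n$ and $\bu^{n+1/2}$ at $t_{n+1}$, so $\div\partial_t\bu^{\dt}$ is constant there and $\|\div\partial_t\bu^{\dt}\|_{L^1(0,T;\Q)}=\sum_{n=0}^{N-1}\|\div\bu^{n+1/2}-\div\bu^{n-1/2}\|_{\Q}$. The elementary identity $\int_0^1\|a+s(b-a)\|_{\Q}^2\,ds=\tfrac14\|a-b\|_{\Q}^2+\tfrac34\|a+b\|_{\Q}^2\ge\tfrac1{12}\|a-b\|_{\Q}^2$, used on each interval with $a=\div\bu^{n-1/2}$ and $b=\div\bu^{n+1/2}$, gives $\|\div\bu^{n+1/2}-\div\bu^{n-1/2}\|_{\Q}^2\le\frac{12}{\dt^n}\int_{t_n}^{t_{n+1}}\|\div\bu^{\dt}(s)\|_{\Q}^2\,ds$; summing over $n$, bounding $\dt^n\ge\min_{n}|\dt^n|$, applying the discrete Cauchy--Schwarz inequality $\sum_n x_n\le\sqrt N\,(\sum_n x_n^2)^{1/2}$, and using once more $\div e_u=-\div\bu^{\dt}$ together with $\|\div e_u\|_{\Q}\le d^{1/2}\|\grad e_u\|_{\Q^{d\times d}}$, one reaches
\begin{equation*}
\|\div\partial_t\bu^{\dt}\|_{L^1(0,T;\Q)}^2\;\le\;\frac{12\,d\,N}{\min_{n}|\dt^n|}\,\|\grad e_u\|_{L^2(0,T;\Q^{d\times d})}^2\,,
\end{equation*}
which is the sharp form of~\eqref{new} (the looser factor $T(\min_{n}|\dt^n|)^{-2}$ stated there follows from $N\le T/\min_{n}|\dt^n|$).

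It then remains to collect the pieces: dividing the last display by $N$ cancels exactly the factor produced by the Cauchy--Schwarz step, leaving $\frac1N\|\div\partial_t\bu^{\dt}\|_{L^1(0,T;\Q)}^2\lesssim(\min_{n}|\dt^n|)^{-1}\|\grad e_u\|_{L^2(0,T;\Q^{d\times d})}^2$, again dominated by the right-hand side of~\eqref{eq:down}; adding this to the bound for the first block and choosing $c^-(\D)$ as the reciprocal of the largest of the finitely many numerical constants accumulated above (depending only on $d$, $C_P$ and $\Upsilon$) yields~\eqref{eq:down}. I expect the only delicate point to be precisely this bookkeeping of the factor $N$: one must keep the sharp $N/\min_{n}|\dt^n|$ form of the inverse inequality rather than the wasteful $T/(\min_{n}|\dt^n|)^2$ form recorded in~\eqref{new}, so that the weight $\frac1N$ in the statement is exactly what compensates it; everything else is a routine use of~\eqref{firststep}, \eqref{eq:constants} and Young's inequality.
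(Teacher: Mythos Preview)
Your proposal is correct and follows essentially the same route as the paper: bound the first two terms by~\eqref{firststep}, and control $\frac1N\|\div\partial_t\bu^{\dt}\|_{L^1(0,T;\Q)}^2$ via the inverse-in-time inequality, using discrete Cauchy--Schwarz together with the piecewise-affine structure of $\bu^{\dt}$ and $\div e_u=-\div\bu^{\dt}$. The paper reaches the same bound $\|\div\partial_t\bu^{\dt}\|_{L^1}^2\le\frac{12N}{\min_n\dt^n}\|\div\bu^{\dt}\|_{L^2}^2$ by the slightly different intermediate step $\|\div(\bu^{n+1/2}-\bu^{n-1/2})\|^2\le 2(\|\div\bu^{n+1/2}\|^2+\|\div\bu^{n-1/2}\|^2)$ combined with $\frac{6}{\dt^n}\int_{t_n}^{t_{n+1}}\|\div\bu^{\dt}\|^2\ge\|\div\bu^{n+1/2}\|^2+\|\div\bu^{n-1/2}\|^2$, rather than your direct lower bound on the quadratic average; the two are equivalent.

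One small arithmetic slip: the identity you quote should read $\int_0^1\|a+s(b-a)\|_{\Q}^2\,ds=\tfrac1{12}\|a-b\|_{\Q}^2+\tfrac14\|a+b\|_{\Q}^2$, not $\tfrac14\|a-b\|_{\Q}^2+\tfrac34\|a+b\|_{\Q}^2$ (the latter is three times the former). The lower bound $\ge\tfrac1{12}\|a-b\|_{\Q}^2$ you actually use is unaffected, so the argument stands.
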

\begin{proof}[Proof of~\eqref{new}.] We use the following inequality with $\div\bu=0$, noting
$\frac6{\dt^n}\int_{t^n}^{t^{n+1}} \|\div\bu^{\dt}\|_{\Q}^2\ge\|\div\bu^{n+\frac12}\|_{\Q}^2+\|\div\bu^{n-\frac12}\|_{\Q}^2$:\\
$ \displaystyle
\|\div\partial_t\bu^{\dt}\|_{L^{1}(0,T;{\Q})}^2 \le N \sum_{n=0}^{N-1} \|\div(\bu^{n+\frac12}-\bu^{n-\frac12})\|_{\Q}^2 
\le 2N \sum_{n=0}^{N-1} (\|\div\bu^{n+\frac12}\|_{\Q}^2+\|\div\bu^{n-\frac12}\|_{\Q}^2) \le \sum_{n=0}^{N-1} \frac{12N}{\dt^n}\int_{t^n}^{t^{n+1}} \|\div\bu^{\dt}\|_{\Q}^2 \,.
$
\end{proof}
So the framework introduced in~\cite{verfuerth-2010} for an a posteriori analysis of a Backward Euler discretization of Stokes problem still applies here with Chorin-Temam scheme
(it applies with any scheme provided the reconstructions $\bu^{\dt}$, $p^{\dt}$ are defined using appropriate discrete variables).
Though, the point is now to let not only the residuals, but also the two last terms in~\eqref{eq:error_ct_104}, be easily and sharply estimated
(contrary to the fully discrete Backward Euler case in~\cite{verfuerth-2010}, these terms cannot be neglected here because they can be of the same order as the error).
We draw the following conclusions.
First, Prop.~\ref{above} and~\ref{below} suggest that the procedure of~\cite{verfuerth-2010} should be 
modified here 
to estimate the error 
\begin{equation}
 \label{error}
\mynu\|\grad e_u\|_{L^2(0,T;\Q^{d\times d})}^2 +\|\partial_t e_u + \grad e_p\|_{L^2(0,T;\U')}^2 
\end{equation}
a posteriori in a more robust way than by the 
estimator~\eqref{estimator1} obtained straightforwardly from the estimations above:
\begin{equation}
\label{estimator1} 
 \mynu\|\grad\bu^{\dt,+}-\grad\bu^{\dt}\|_{L^2(0,T;\Q^{d\times d})}^2 + \mynu\|\div \bu^{\dt}\|_{L^{2}(0,t;{\Q})}^2 + \|\div\partial_t\bu^{\dt}\|_{L^{1}(0,T;{\Q})}^2 \,.
\end{equation}
For instance, if one replaces~\eqref{sharp1} with the 
following upper bound~\eqref{notsharp1}, on noting~\eqref{eq:constants} and~\eqref{eq:H1stability},
\begin{equation}
\label{notsharp1}
\int_0^t \left| ( e_u, \partial_t\Pi e_u ) \right| 
\lesssim \|\grad e_u\|_{L^2(0,t;{\Q}^d)} \|\div\partial_t\bu^{\dt}\|_{L^{2}(0,t;\Q)} \,,
\end{equation}
then bounds similar to~\eqref{eq:up} and~\eqref{eq:down} hold but with $\|\div\partial_t\bu^{\dt}\|_{L^{2}(0,t;\Q)}$ instead of $\|\div\partial_t\bu^{\dt}\|_{L^{1}(0,t;\Q)}$
and {\it without invoking discretization parameters} like $N$ and $\dt^n$, 
which suggests the a posteriori error estimator~\eqref{estimator2} more robust than~\eqref{estimator1}:
\begin{equation}
\label{estimator2} 
\mynu\|\grad\bu^{\dt,+}-\grad\bu^{\dt}\|_{L^2(0,T;\Q^{d\times d})}^2 + \mynu\|\div \bu^{\dt}\|_{L^{2}(0,T;{\Q})}^2 + \|\div\partial_t\bu^{\dt}\|_{L^{2}(0,T;{\Q})}^2\,.
\end{equation}
Of course, this is not a fully efficient estimator yet, since it is a priori not bounded above {\it and} below by the error~\eqref{error}, 
even if one neglects the source error $\|\f-\f^{\dt}\|_{L^2(0,T;\Q^d)}^2$ of ``high'' order $O(\dt^2)$ -- recall~\eqref{apriori} --.
It is nevertheless useful in some cases, as shown in the next section. Second,~\eqref{estimator2} sometimes improves some estimators in the literature like
\begin{equation}
\label{estimator3} 
 \mynu\|\grad\bu^{\dt,+}-\grad\bu^{\dt}\|_{L^2(0,T;\Q^{d\times d})}^2 + \sum_{n=0}^{N-1}\|\dt^{n+1}\grad p^{n+1}-\dt^n\grad p^n\|_{\Q}^2
\end{equation}
that was proposed in~\cite{kharrat-mghazli-2010,kharrat-mghazli-2011}.
Clearly, for small $\dt$, our estimator is larger than the one proposed in~\cite{kharrat-mghazli-2010,kharrat-mghazli-2011}, 
on noting 
$$
\|\dt^{n+1}\grad p^{n+1}-\dt^n\grad p^n\|_{\Q}^2 \lesssim \|\div \bu^{n+1/2}-\div \bu^{n-1/2}\|_{\Q}^2 \lesssim \dt \left( \dt^n \|\div \partial_t \bu^{\dt}\|_{\Q}^2(t) \right)
$$
for $t\in(t_n,t_{n+1}]$, after using a Poincar\'e inequality with~\eqref{eq:chorin-temamPoisson2C}.
%
And, the numerical example of the following Section~\ref{num} indeed shows that~\eqref{estimator2} is a better upper-bound than~\eqref{estimator3},
at least when the error 
is not mainly driven by $\|\div \bu^{\dt}\|_{L^{\infty}(0,T;{\Q})}^2$. 

\section{Numerical results}
\label{num}

%

We want to bring numerical evidences that 
estimator~\eqref{estimator2} is sometimes i) useful 
and ii) better than~\eqref{estimator1} and~\eqref{estimator3}. 
Given $\lambda>0$, we numerically compute the efficiencies of the three estimators 
using discrete approximations of
$$ \bu=\pi\sin(\lambda t)\left(\sin(2\pi y)\sin(\pi x)^2;-\sin(2\pi x)\sin(\pi y)^2\right) \qquad p=\sin(\lambda t)\cos(\pi x)\sin(\pi y) $$
in 
$\D\equiv(-1,1)\times(-1,1)$ ($d=2$), 
with $t\in(0,T)$ uniformly discretized by time steps $\dt=T/N$ ($N\in\mathbb{N}$) when $\mynu=1$.

We discretize in space the velocity components and the pressure with, respectively, continuous $\mathbb{P}_2$ and $\mathbb{P}_1$ Finite-Elements functions,
i.e. in conforming discrete spaces $\U_h\subset\U,\Qh\subset(\Q\cap H^1(\D))$ defined on regular simplicial meshes of $\D$. 
The a posteriori analysis of Section~\ref{apost} still applies with right-hand side in~\eqref{eq:error_ct_104} defined using now fully-discrete approximations. 
Then indeed, following~\cite{verfuerth-2010}, one can 
decompose the fully-discrete residuals in a 
sum of two terms, one accounting for space-discretization errors and one for time-discretization errors. 
The two last terms in the (new) right-hand side of~\eqref{eq:error_ct_104} remain the same (they are explicitly computable).
This yields estimators linked to the time discretization which are exactly the space-discrete counterparts of the terms in the bounds~\eqref{eq:up} and~\eqref{eq:down}.
Moreover, in our numerical example, space discretization errors 
prove negligible in comparison with time 
discretization errors (as already observed in~\cite{guermond-minev-shen-2006} for $\lambda=1$).
We thus next show only numerical results obtained for one sufficiently fine mesh (with more than $10^5$ vertices). 

\begin{figure}
 \includegraphics[scale=.4]{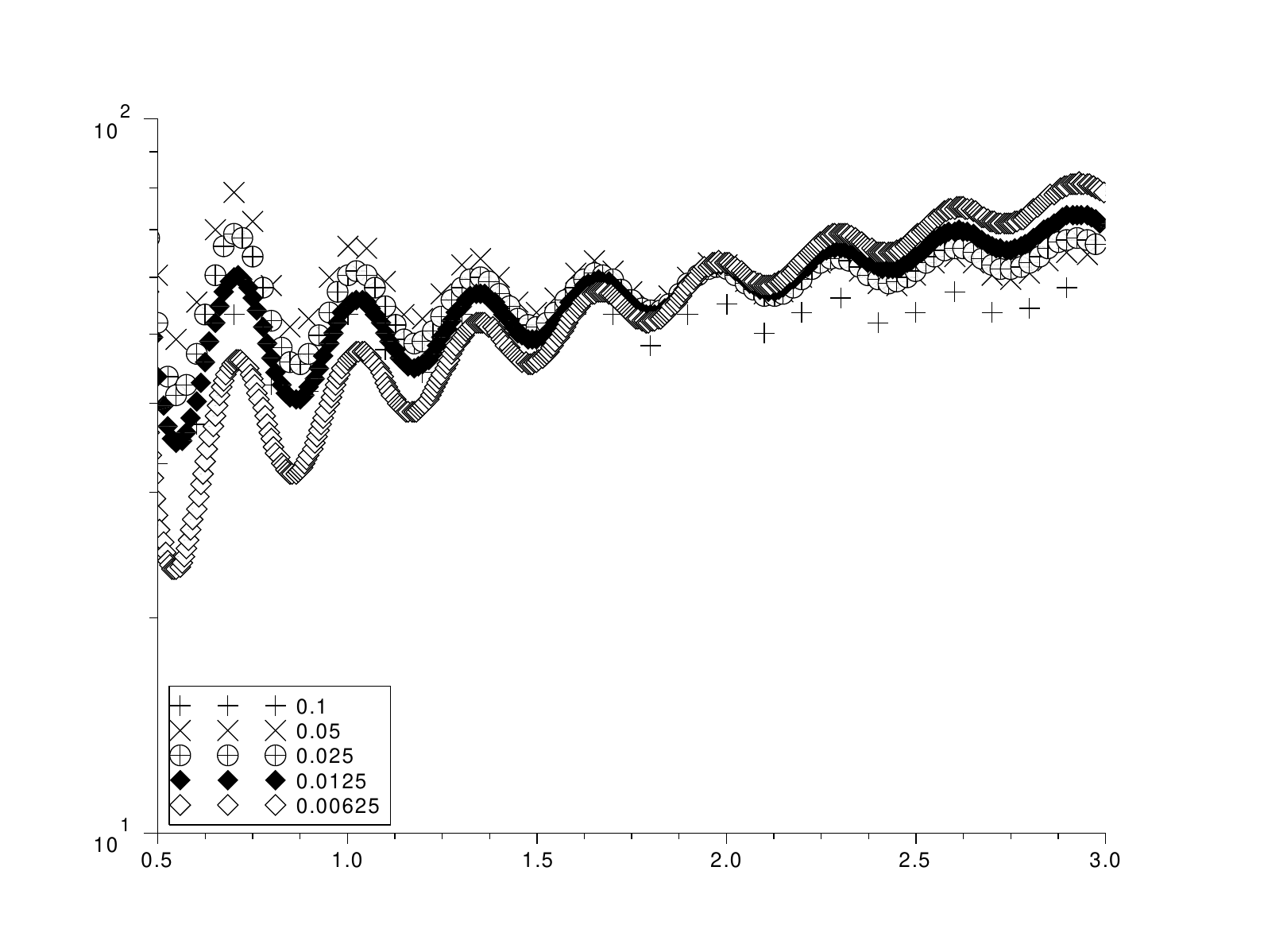}
 \includegraphics[scale=.4]{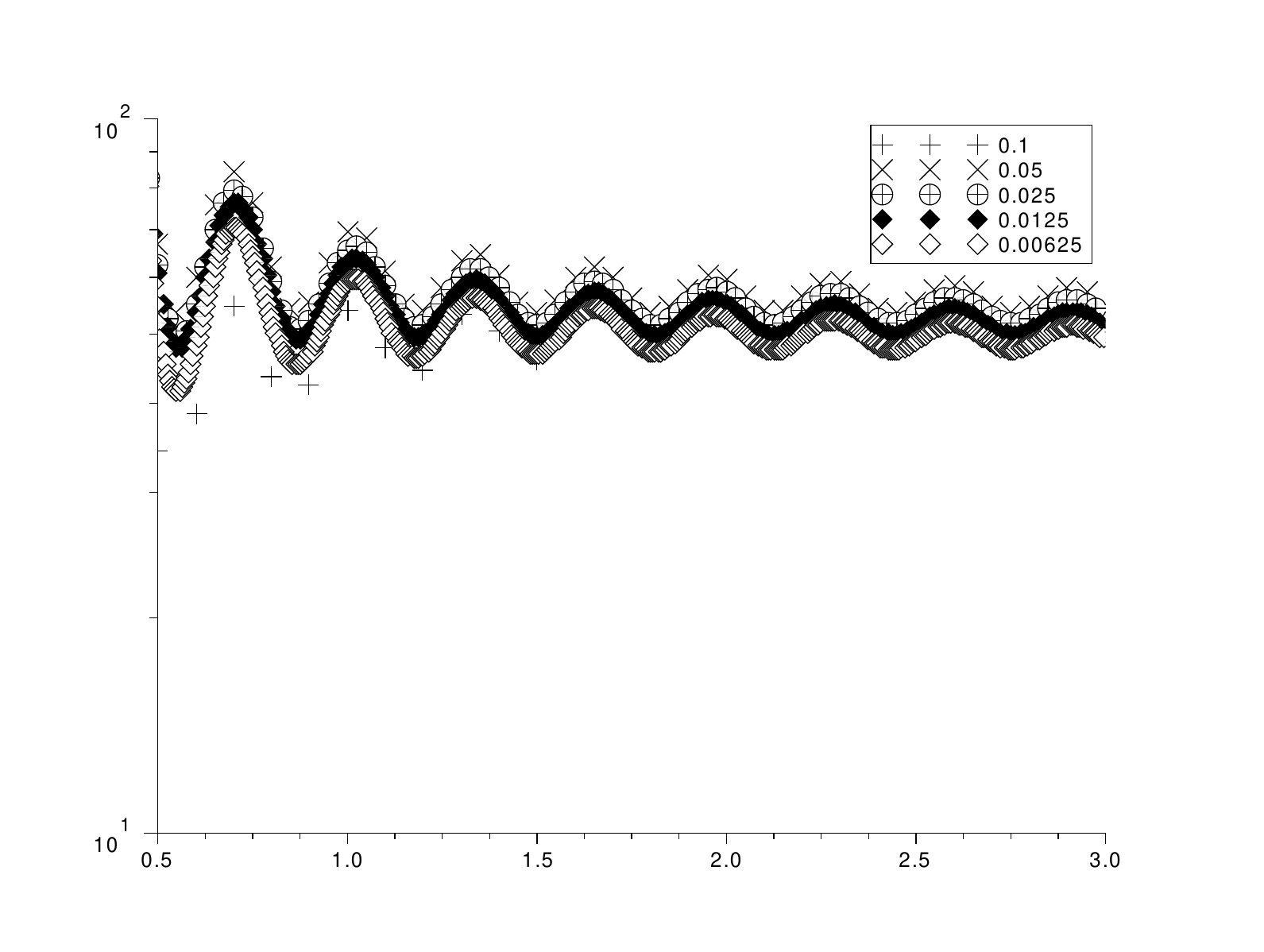}
\\
 \includegraphics[scale=.4]{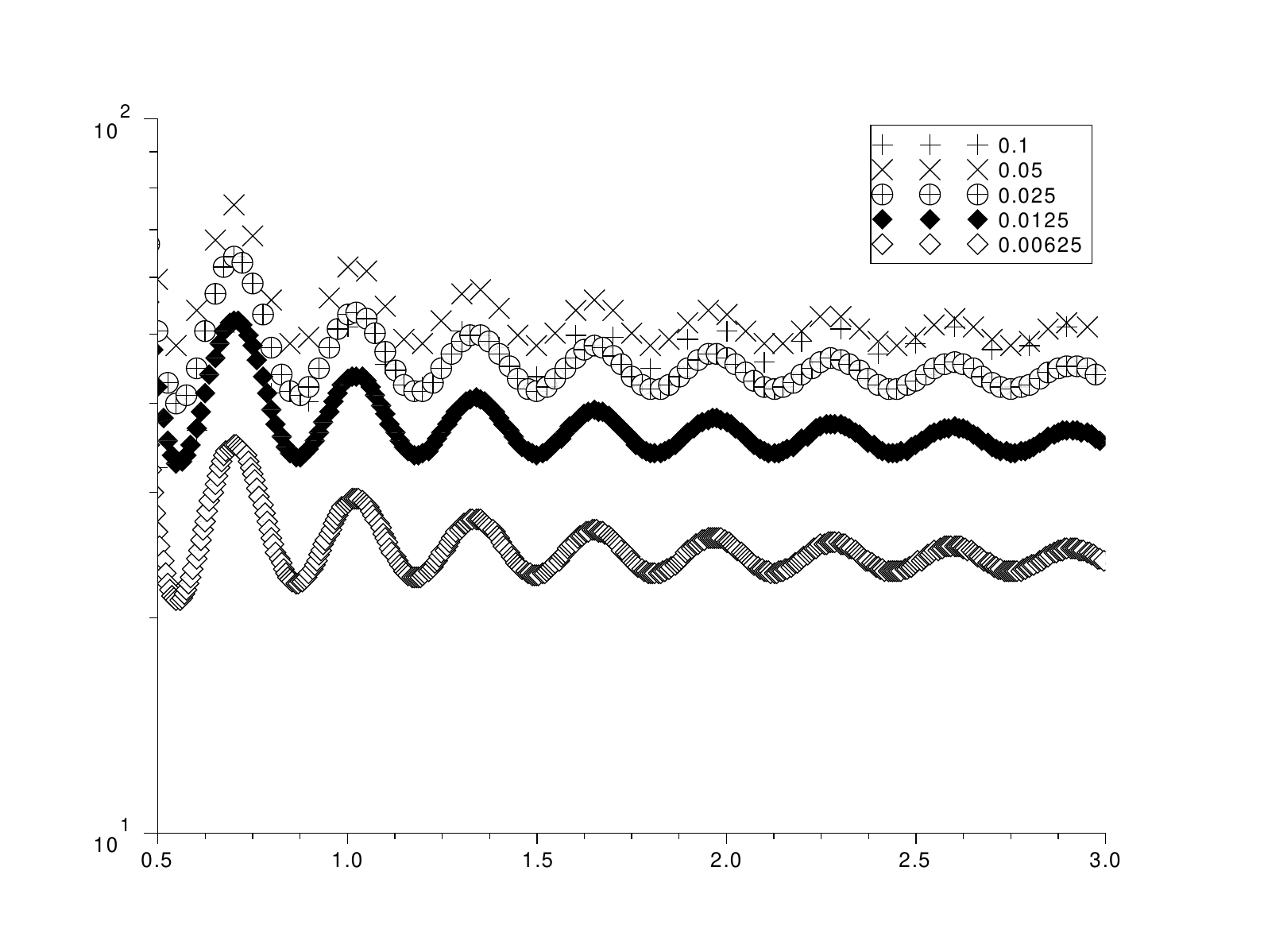}
 \includegraphics[scale=.4]{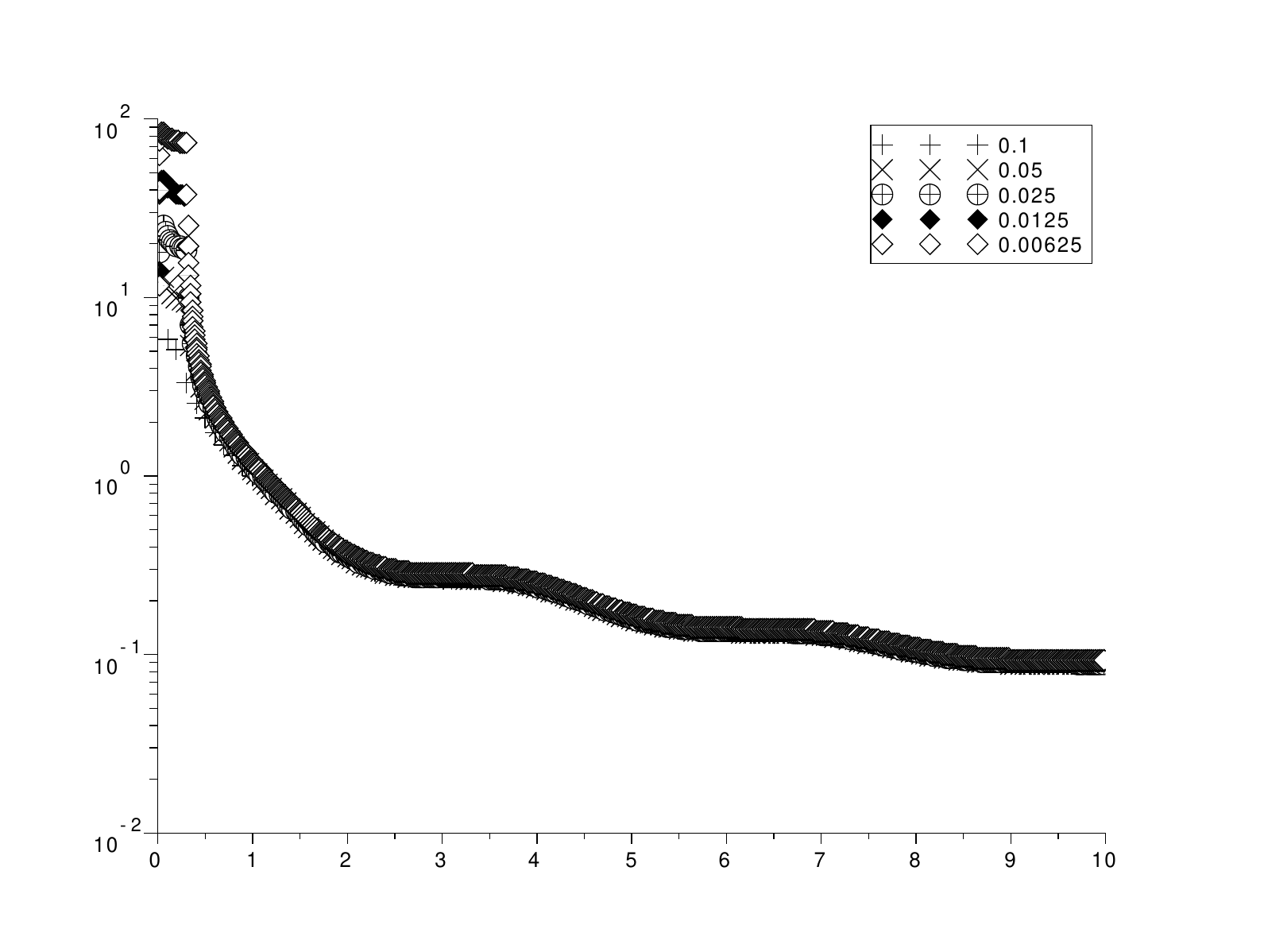}
\caption{\label{fig}
For $\dt=.1,.05,.025,.0125,.00625$, effectivities in log scale (as a function of $T$) of~\eqref{estimator1} -- top left --,~\eqref{estimator2} -- top right --, and~\eqref{estimator3} -- bottom left -- ($\|\div \bu^{\dt}\|_{L^{\infty}(0,T;{\Q})}^2$ included) at estimating~\eqref{error} when $\lambda=10,T\le3$; and $\|\div \bu^{\dt}\|_{L^{\infty}(0,T;{\Q})}^2/$ error~\eqref{error} -- bottom right --  when $\lambda=1,T\le10$.
}
\end{figure}

We compare the effectivities of (space-discrete versions of) the a posteriori error estimators~\eqref{estimator1},~\eqref{estimator2} and~\eqref{estimator3} evoked in the previous section for the (space-discrete) error $\|\grad e_{u_h}\|_{L^2(0,T;\Q^{d\times d})}^2+\|\partial_t e_{u_h} + \grad e_{p_h}\|_{L^2(0,T;\U')}^2$.
One clearly sees from the numerical results obtained for $\lambda=10,T\le3$ in Fig.~\ref{fig} that 
i)~\eqref{estimator1} is not robust when $\dt$ is too small or $T$ too large compared with~\eqref{estimator2}, and  
ii)~\eqref{estimator2} is better than the estimator~\eqref{estimator3} in so far as, for that specific case, it has the same decay rate than the error~\eqref{error} and not a faster one like~\eqref{estimator3}.
Though, our estimator~\eqref{estimator2} is 
still not fully efficient, even when adding the term $\|\div \bu^{\dt}\|_{L^{\infty}(0,T;{\Q})}^2$ to~\eqref{estimator2}, since it is not bounded above {\it and} below by the error.
Furthermore, if we use it as such (that is as a sum of terms without coefficients), 
in some cases, it also fails (like~\eqref{estimator3}) at evaluating correctly the error.
For instance when $\lambda=1$, the error~\eqref{error} scales like $\|\div \bu^{\dt}\|_{L^{\infty}(0,T;{\Q})}^2$ with respect to $\dt$, 
while the other terms in~\eqref{estimator2} are of higher-order in $\dt$.
But this cannot be observed unless $\dt$ is very small,
even if we use~\eqref{estimator2} 
plus $\|\div \bu^{\dt}\|_{L^{\infty}(0,T;{\Q})}^2$ as an estimator\footnote{
Note that in fact we also added the term $\|\div \bu^{\dt}\|_{L^{\infty}(0,T;{\Q})}^2$ to~\eqref{estimator1},~\eqref{estimator2} and~\eqref{estimator3} in Fig.~\ref{fig}, 
but it is small compared to other terms, 
thus unseen. 
}
insofar as the magnitude of the latter term is much smaller than the former ($10^{-1}$ vs. $10^2$).
Then, for too large $\dt$, the effectivity of our estimator also decays, and the error still cannot be evaluated confidently. 
So, without even mentionning the error $\|e_u\|_{L^\infty(0,T;\Q^d)}$,
the question how to estimate a posteriori error discretizations in Chorin-Temam scheme {\it efficiently and robustly} (in all cases) thus remains open.
One should at least coefficient adequately the terms in the estimator above~\eqref{estimator2} plus $\|\div \bu^{\dt}\|_{L^{\infty}(0,T;{\Q})}^2$.
We nevertheless hope to have shed new light on the problem.





\bibliographystyle{model1-num-names}



\end{document}